\definecolor{red}{rgb}{1,0.00,0.00}
\newsavebox{\@brx}
\newcommand{\llangle}[1][]{\savebox{\@brx}{\(\m@th{#1\langle}\)}%
  \mathopen{\copy\@brx\mkern2mu\kern-0.9\wd\@brx\usebox{\@brx}}}
\newcommand{\rrangle}[1][]{\savebox{\@brx}{\(\m@th{#1\rangle}\)}%
  \mathclose{\copy\@brx\mkern2mu\kern-0.9\wd\@brx\usebox{\@brx}}}
\author{Ievgen Bondarenko and Kate Juschenko}
\title{\textbf{The zero divisor conjecture and Mealy automata}}
\newcommand\Fb{{\mathbb F}}
\newcommand{\A}{\mathsf{A}}
\newcommand{\B}{\mathsf{B}}
\newcommand{\Helix}{\mathcal{H}}
\newtheorem{theorem}{Theorem}
\newtheorem{proposition}[theorem]{Proposition}
\theoremstyle{definition}
\newtheorem{definition}{Definition}
\newtheorem{conjecture}{Conjecture}
\newtheorem{remark}{Remark}
\begin{document}
\begin{abstract}
The zero divisor conjecture is sufficient to prove for certain class of finitely presented groups where the relations are given by a pairing of generators. We associate Mealy automata to such pairings, and prove that the zero divisor conjecture holds for groups corresponding to invertible automata with three states. In particular, there cannot be zero divisors of support three corresponding to invertible pairings.  
\end{abstract}
\maketitle

In 1956 Irving Kaplansky proposed several problems on group rings. We are interested in two of them: 

\begin{conjecture}[Zero Divisor Conjecture] Let $K$ be a finite field and $G$ be a torsion-free group. Then the group ring $K[G]$ has no zero divisors.
\end{conjecture}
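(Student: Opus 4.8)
The plan is to argue by contradiction from a minimal counterexample. Suppose $\alpha\beta = 0$ with $\alpha,\beta \in K[G]$ nonzero and $G$ torsion-free, and choose such a pair with the total size of the supports minimal. Writing $\alpha = \sum_{i} a_i g_i$ and $\beta = \sum_{j} b_j h_j$, the vanishing $\alpha\beta = 0$ is equivalent to the statement that the multiset of products $g_i h_j$ collapses into classes on which the coefficient sums $\sum a_i b_j$ vanish; since $K$ is finite there are only finitely many coefficient patterns to consider, and each equality $g_i h_j = g_{i'} h_{j'}$ forced by such a collapse is a defining relation \emph{pairing} a generator from the support of $\alpha$ with one from the support of $\beta$. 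Passing to the group $G_0$ generated by the support elements subject to exactly these relations, any counterexample factors through a torsion-free pairing group, so it suffices to rule out zero divisors there.

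I would then encode the pairing as a Mealy automaton along the lines indicated in the introduction: the support generators index the alphabet, the cancellation pattern among the products $g_i h_j$ prescribes the transition and output maps, and the number of states matches the support size, so the support-three invertible case corresponds to the three-state automata already under control. The role of invertibility is to record that $\alpha$ and $\beta$ enter symmetrically (the dual pairing governing $\beta\alpha$), and torsion-freeness of $G_0$ should manifest as a dynamical condition on the automaton, namely the absence of a cyclic word whose action has finite order. Thus a zero divisor in a torsion-free group yields an automaton realizing a pairing but admitting no such torsion cycle, and the goal becomes to show this is impossible.

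The main line of attack is induction on the number of states $n$. The cases $n = 1, 2$ are immediate and $n = 3$ with invertibility is the finite analysis already available; for the inductive step I would seek to split a zero divisor over an $n$-state automaton into zero divisors over automata with fewer states, either by restricting to a proper invariant subset of the alphabet (a subautomaton) or by collapsing a congruence on the state set, in each case verifying that torsion-freeness and the nonvanishing of both factors are preserved, and then contradicting minimality of the support.

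The hard part, and the reason the statement remains a conjecture rather than a theorem, is exactly this inductive step once $n \geq 4$ or once invertibility is dropped. For three invertible states the transition data is rigid enough that the admissible cancellation patterns form a short, explicitly checkable list; with more states the cycle structure of the automaton can be arbitrarily complicated, non-invertible transitions allow collapses that break any clean subautomaton decomposition, and nothing forces a minimal configuration to be reducible. Overcoming this would require either a uniform dynamical obstruction to torsion valid for all $n$, or a structural theorem guaranteeing that every pairing automaton admits a proper invariant decomposition; producing either is the essential, and as yet unresolved, obstacle.
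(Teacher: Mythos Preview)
The statement you are attempting to prove is Kaplansky's Zero Divisor Conjecture itself, which is an \emph{open problem}; the paper does not prove it, and no proof exists in the literature. There is therefore no ``paper's own proof'' to compare against. What the paper actually establishes is the much narrower Theorem~4: the universal groups $\Gamma_C$ attached to $3\times 2n$ \emph{invertible} pairing matrices contain nontrivial torsion, so such pairings cannot arise from zero divisors.

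To your credit, you seem to recognize this: your final paragraph explicitly says the inductive step for $n\geq 4$ and for non-invertible pairings is ``as yet unresolved,'' which is precisely the state of affairs. But then what you have written is not a proof proposal; it is a heuristic outline of a possible strategy together with an honest admission that the strategy has no known way to close. That is fine as commentary, but it should not be presented as a proof of the conjecture.

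A few technical points on the outline itself. First, in the paper's encoding the support of one factor indexes the \emph{states} and the support of the other factor indexes the \emph{alphabet}, not both the alphabet as you wrote. Second, your proposed induction by passing to subautomata or quotient automata has no mechanism in general: nothing guarantees that a minimal counterexample admits a proper invariant sub-alphabet or a nontrivial state congruence compatible with the pairing structure, and indeed the bireversibility forced by invertible pairings tends to make such decompositions rare. Third, the reduction over $\mathbb{F}_2$ produces pairings directly (each product $a_ib_j$ is matched with exactly one other), so the phrase ``since $K$ is finite there are only finitely many coefficient patterns'' is beside the point here and does not by itself yield a pairing for general finite $K$. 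None of these issues is the decisive obstruction, however; the decisive obstruction is simply that the conjecture is open.
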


\begin{conjecture}[Direct Finiteness Conjecture] Let $K$ be a finite field and $G$ be a group. Then the group ring $K[G]$ is directly finite, that is, for every $a,b\in K[G]$ the equation $ab=1$ implies $ba=1$.
\end{conjecture}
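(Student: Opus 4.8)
I would attack this by splitting on the characteristic of $K$, since the two regimes call for different tools; in both I would first make the harmless reduction that if $ab=1$ in $K[G]$ then $a$ and $b$ involve only finitely many group elements and finitely many scalars, so one may assume $G$ finitely generated and $K$ a finitely generated field.

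\emph{Characteristic zero.} The plan is to push $K[G]$ into an algebra carrying a faithful trace. A finitely generated field of characteristic $0$ embeds in $\mathbb{C}$, so it suffices to treat $K=\mathbb{C}$; then $\mathbb{C}[G]$ acts faithfully by left convolution on $\ell^2(G)$ and lies inside the group von Neumann algebra $L(G)$ with its faithful normal tracial state $\tau(x)=\langle x\delta_e,\delta_e\rangle$. From $ab=1$ one checks that $e=ba$ and $1-e$ are idempotents and $\tau(1-e)=\tau(1)-\tau(ba)=1-\tau(ab)=0$; writing the idempotent $1-e$ as $gpg^{-1}$ with $p$ a projection and $g$ invertible, and using $\tau(gpg^{-1})=\tau(pg^{-1}g)=\tau(p)$, faithfulness of $\tau$ forces $p=0$, i.e. $ba=1$. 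I expect no obstruction here — this is essentially Kaplansky's own observation.

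\emph{Positive characteristic.} Now there is no trace, and the plan is instead to embed $K[G]$ into a ring where direct finiteness is forced by rank. For a sofic $G$ I would take asymptotically multiplicative, asymptotically free maps $G\to\mathrm{Sym}(n_k)\hookrightarrow M_{n_k}(K)$, measured in the normalized rank metric $\rho(A)=\tfrac1n\mathrm{rank}(A)$, and let them induce a homomorphism $K[G]\to R:=\prod_{\mathcal U}M_{n_k}(K)$ into the metric ultraproduct; asymptotic freeness makes it injective. In each $M_{n_k}(K)$, $AB=I$ already forces $A$ invertible and hence $BA=I$; a rank-perturbation argument upgrades this to: a one-sided inverse in $R$ agrees at $\rho$-distance $0$ with a genuine two-sided inverse, so $R$ is directly finite, and therefore so is the subring $K[G]$.

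\emph{Main obstacle.} The real difficulty is confined to positive characteristic: the construction above only covers \emph{sofic} groups, and whether every group is sofic is open. So I would not expect to finish the conjecture in general by this route — a complete proof would need either an intrinsic faithful rank (or Sylvester-rank) function on $K[G]$ valid for all $G$, or a proof that all groups are sofic, both well beyond elementary methods. In the setting of this paper I would expect the groups arising from pairings of generators and their Mealy automata to be concrete enough (typically residually finite, hence sofic) for the dichotomy above to settle the question.
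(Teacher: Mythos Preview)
The statement you are attempting is not proved in the paper: it is stated as Conjecture~2 and explicitly declared open (``Both listed conjectures are open even for the field $\mathbb{F}_2$''). There is no paper proof to compare against.

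Your outline accurately summarizes what is known. Since $K$ is assumed \emph{finite}, the characteristic is always positive, so your characteristic-zero discussion---though correct for the broader form of Kaplansky's conjecture---is irrelevant to the statement as written. Your positive-characteristic argument via sofic approximations and the rank-metric ultraproduct of matrix algebras is precisely the Elek--Szab\'o theorem, which the paper cites as \cite{ES04}; you correctly identify that this settles the conjecture only for sofic $G$ and that the remaining obstruction is the open problem of whether every group is sofic.

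Your closing expectation is unwarranted, however. The groups $\Gamma_C$ arising from pairing matrices are not known to be residually finite or sofic in general; the closely related groups $G_{\mathsf{A}}$ attached to bireversible automata can be non-residually finite, even virtually simple, and not elementary sofic, and the paper's Remark~2 explicitly says it is unknown whether they are sofic or satisfy Kaplansky's conjectures. So your dichotomy does not close the question even in the concrete setting of this paper---and in any case the paper's central concern for $\Gamma_C$ is the zero divisor conjecture, which soficity alone would not resolve.
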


One of the other famous Kaplansky's conjectures on group rings --- the unit conjecture --- was recently disproved by Giles Gardam in \cite{Gardam}.

Both listed conjectures are open even for the field $\Fb_2$, which we will be considering through the paper. The conjectures were confirmed for many classes of groups. In particular, the Direct Finiteness conjecture is true for sofic groups (see \cite{ES04}). This class of groups is very large, and at the moment there are no examples of non-sofic groups. We refer the reader to a classical book of Passman on group rings \cite{Pa77} for more discussion on Kaplansky's conjectures and their relation to each other.

In \cites{DHJ,DJ} and \cite{Schw}, the authors proposed an algorithmic approach to both conjectures  (the idea circulated among mathematicians for some time, see for example \cite{Thom}). They introduced a class of finitely presented groups $\Gamma_C$ that are universal among possible counterexamples: if $G$ is a counterexample, then $G$ contains a subgroup $H$ that is a quotient of some $\Gamma_C$ that is also a counterexample. Here $C$ is a \textit{pairing matrix}, that is, an integer $n\times m$ matrix $C=(c_{ij})$, where $nm$ is even and $c_{i,j}\in\{1,2,\ldots,nm/2\}$, with the following property: no row or column contain a repeated value, and for every pair $(i,j)$ there exists exactly one pair $(i',j')\neq (i,j)$ such that $c_{ij}=c_{i'j'}$. To every pairing matrix $C$, we can associate a finitely presented group:
\[
\Gamma_C=\langle a_1,\ldots,a_n,b_1,\ldots,b_m\, | \, a_ib_j=a_{i'}b_{j'} \mbox{ whenever $c_{ij}=c_{i'j'}$}\rangle.
\]
Let us recall the connection with the group ring. 

Let $G$ be a group and consider two elements $a,b\in \Fb_2[G]$ satisfying $ab=0$. We may write
\begin{equation*}
a=a_1+\cdots+a_{n} \ \text{ and } \ b=b_1+\cdots+b_{m}
\end{equation*}
for distinct elements $a_1,\ldots,a_{n}$ and distinct elements $b_1,\ldots,b_{m}$ of $G$. In this case we say $a$ has rank $n$ and the support of $a$ is $\{a_1,\ldots,a_n\}$. By distributing the identity $ab=0$, we get $a_ib_j=a_{i'}b_{j'}$ for certain pairing $(i,j)\sim_{\pi} (i',j')$ of the indices. Then the subgroup of $G$ generated by $a_1,\ldots,a_n$ and $b_1,\ldots,b_m$ is a quotient of the group $\Gamma_C$, where the matrix $C$ is given by the pairing $\pi$. Therefore, the Zero Divisor Conjecture over $\Fb_2$ is equivalent to:


\begin{conjecture}
Let $C$ be a pairing matrix and the generators $a_1,\ldots,a_n$ and $b_1,\ldots,b_m$ of $\Gamma_C$ are distinct. Then $\Gamma_C$ contains a nontrivial element of finite order.
\end{conjecture}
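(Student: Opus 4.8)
The statement is precisely the Zero Divisor Conjecture over $\Fb_2$, rephrased through the universal groups $\Gamma_C$, so a full proof is not in reach; the realistic plan is to establish it for all pairing matrices $C$ whose associated Mealy automaton is ``small'', and in particular for three-state invertible automata, which is already a regime where $\Gamma_C$ can be infinite. To build the automaton, I would read each defining relation $a_ib_j=a_{i'}b_{j'}$, with $(i,j)\sim_\pi(i',j')$, as a transition rule: take the $a_i$ as the states and the $b_j$ as the input/output alphabet of a Mealy automaton $\mathcal{A}_C$ in which state $a_i$ on input $b_j$ outputs $b_{j'}$ and moves to state $a_{i'}$ (transposing $C$ exchanges the roles of states and letters). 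This is automatically a well-defined complete deterministic automaton, and it is \emph{invertible} --- the case of an invertible pairing --- exactly when the output map $b_j\mapsto b_{j'}$ is a permutation for each fixed $i$, in which case $\mathcal{A}_C$ generates a self-similar group and one may pass between words in $\Gamma_C$ and their sections.

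Before any case analysis I would simplify the presentation. Setting $t_{(i,j)}:=a_ib_j$, the relations collapse to $t_{(i,j)}=t_{\sigma(i,j)}$ for the fixed-point-free involution $\sigma$ determined by $\pi$ (the ``no repeated value in a row or column'' condition says that $\sigma$ never preserves a row or a column). Since $b_j=a_1^{-1}t_{(1,j)}$ and $a_i=t_{(i,1)}t_{(1,1)}^{-1}a_1$, every relation becomes $t_{(i,1)}t_{(1,1)}^{-1}t_{(1,j)}=t_{(i',1)}t_{(1,1)}^{-1}t_{(1,j')}$ --- the generator $a_1$ has cancelled and occurs in no relation --- so one checks that $\Gamma_C\cong\mathbb{Z}\ast\Lambda_C$, where $\Lambda_C$ is generated by the elements $t_{(i,1)}$ and $t_{(1,j)}$ with the relations inherited from $\sigma$. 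Consequently $\Gamma_C$ has a nontrivial torsion element iff $\Lambda_C$ does, and the distinctness hypothesis becomes: $t_{(i,1)}\neq t_{(i',1)}$ in $\Lambda_C$ for $i\neq i'$, and $t_{(1,j)}\neq t_{(1,j')}$ for $j\neq j'$. The plan is now to run through the three-state invertible automata $\mathcal{A}_C$ --- which, although the alphabet is unbounded, are controlled by only three permutations linked by the recursion --- and for each to show one of two things: either (i) an explicit word in the generators is nontrivial in $\Gamma_C$ but has finite order, which is how it goes already for the $2\times2$ pairing, where the two relations force $b_2b_1^{-1}=b_1b_2^{-1}$ and hence $(b_2b_1^{-1})^2=1$; or (ii) the relations force $t_{(i,1)}=t_{(i',1)}$ or $t_{(1,j)}=t_{(1,j')}$ for some $i\neq i'$ or $j\neq j'$, so that this pairing contradicts distinctness and contributes no counterexample.

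The main obstacle is the one that keeps Kaplansky's conjecture open: a group $\Gamma_C$ can be infinite with large abelianization, so there is no homological or local reason for torsion to appear, and a finite-order element, if it exists, must be produced by the global interplay of all the relations at once. Even within the three-state invertible class the crux is to prove that whenever $\mathcal{A}_C$ is rich enough to keep all the generators distinct, the group $\Lambda_C$ is nonetheless finite --- or at least not torsion-free --- and I expect this to require a genuine structural study of the self-similar groups that occur (singling out the contracting or self-replicating ones and locating their torsion) rather than a soft counting argument. For automata with more states the scheme runs straight into the general conjecture: the associated groups cease to admit a uniform description and the case analysis is no longer tractable.
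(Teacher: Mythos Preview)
You correctly identify that the displayed statement is the full Zero Divisor Conjecture over $\Fb_2$ and that only the three-state invertible case is within reach; the paper does not prove the conjecture either, only Theorem~4 for $3\times 2n$ invertible pairings. Your free-product decomposition $\Gamma_C\cong\mathbb{Z}*\Lambda_C$ is a clean observation, but it is not used in the paper, and your plan for the three-state case remains a plan: ``run through the three-state invertible automata'' is not a finite case analysis, since the alphabet size $2n$ is unbounded, and you acknowledge as much by deferring to a ``genuine structural study of the self-similar groups''.

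The paper supplies exactly the structural input you are missing, and it is elementary combinatorics rather than self-similar group theory. The key step is that every $3\times 2n$ invertible pairing matrix admits, after permuting rows and columns, a \emph{cyclic-bipartite} form: the alphabet splits as $\{x_1,\dots,x_n\}\sqcup\{y_1,\dots,y_n\}$ and all arrows of $\A_C$ are $a_i\xrightarrow{x|y}a_{i+1}$, $a_i\xrightarrow{y|x}a_{i-1}$ (indices mod $3$). From this one reads off that each of $t_1=a_1^{-1}a_2$, $t_2=a_2^{-1}a_3$, $t_3=a_3^{-1}a_1$ equals $x_sy_{\sigma(s)}^{-1}$ for \emph{every} $s$ and suitable permutations $\sigma$. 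A one-line directed-bipartite-graph argument (each vertex has outdegree one, hence a cycle of length at most $2n$) then gives $(t_it_j^{-1})^{m_{ij}}=e$ with $1\le m_{ij}\le n$ for all $i\ne j$; if some $t_i\ne t_j$ this is the desired torsion element, and if $t_1=t_2=t_3$ then $t_1^3=t_1t_2t_3=e$ with $t_1=a_1^{-1}a_2\ne e$ by the distinctness hypothesis. No classification of automaton groups enters, and one should be cautious about importing that language: the self-similar group generated by $\A_C$, and the lattice-type group $G_{\A}$ with relations $sx=yt$, are \emph{different} groups from $\Gamma_C$ --- the paper even remarks that the $G_{\A}$ are always torsion-free --- so contracting or self-replicating properties of the automaton group would not by themselves locate torsion in $\Gamma_C$.
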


%

The groups $\Gamma_C$ were analyzed with respect to these conjectures in \cites{DHJ,Schw}. It was shown that there are no counterexamples with an element of rank two, the groups $\Gamma_C$ for $2\times n$ pairings satisfy the conjectures. With the computer assistance, all groups $\Gamma_C$ up to sizes $3\times 11$ and $5\times 5$ were determined in \cite{DHJ}, and they satisfy the Direct Finiteness Conjecture. In \cite{Schw}, it is shown that there are no zero divisors, where the ranks of elements are $(3,m)$ for $m\leq 15$ and $(4,m)$ for $m\leq 6$. In \cites{Ali,Ali2}, this result was extended to the ranks $(3,m)$ for $m\leq 19$ and $(4,m)$ for $m\leq 8$.

In this paper, we associate Mealy automata to pairings matrices and relate certain cycles in these automata to elements of finite order. We prove that the group $\Gamma_C$ corresponding to invertible automata with three states contains nontrivial elements of finite order (Theorem~4). In particular, there cannot be zero divisors of support three corresponding to invertible pairings.

\section{Mealy automata and pairing matrices}

A Mealy automaton over an alphabet $X$ is a quadruple $\A = (S, X, \mu, \nu)$, where $S$ is the set of states; $X$ is an alphabet; $\mu : S \times X \rightarrow S$ is the transition map; and $\nu  : S \times X \rightarrow X$ is the output map. An automaton $\A$ can be identified with a directed labeled graph whose vertices are identified with the states of $\A$, and for every state $s \in S$ and every letter $x \in X$, the graph has an arrow from $s$ to $\mu(s, x)$ labeled by $x|\nu(s, x)$.


Let $C$ be a $n\times m$ pairing matrix. We associate to $C$ a Mealy automaton $\A_C$ with the set of states $S=\{a_1,\ldots,a_n\}$ over the alphabet $X=\{b_1,\ldots,b_m\}$ and the following arrows:
\[
a_i\xrightarrow{b_j|b_{j'}}a_{i'} \ \mbox{ whenever $c_{ij}=c_{i'j'}$},
\]
see example in Figure~\ref{fig_automaton_Ac}.

\vspace{-0.5cm}
\begin{figure}[h]
\begin{multicols}{2}
\[
C=\left(
    \begin{array}{cccc}
      1 & 2 & 3 & 4 \\
      5 & 6 & 2 & 1 \\
      3 & 4 & 5 & 6 \\
    \end{array}
  \right)
\]\vspace{1cm}

\begin{tikzpicture}[>=stealth, scale=0.6, shorten >=2pt, on grid, auto,thick,every initial
by arrow/.style={*->}, el/.style = {inner sep=2pt, align=left, sloped},
every label/.append style = {font=\tiny}]
   \node[state] at (0,0) (a)   {$a_1$};
   \node[state] at (-3,-5) (b) {$a_2$};
   \node[state] at (3,-5) (c)  {$a_3$};

   \tikzstyle{every node}=[font=\footnotesize]
    \path[->]
    (a) edge [bend right] node [el] {$b_1|b_4$ \\ $b_2|b_3$} (b)
    (b) edge [bend left=20] node [el,below] {$b_4|b_1$ \\ $b_3|b_1$} (a)    
    (b) edge [bend right] node [el,below] {$b_1|b_3$, $b_2|b_4$} (c)
    (c) edge [bend left=20] node [el,above] {$b_3|b_1$, $b_4|b_2$} (b)
    (c) edge [bend right] node [el,above] {$b_1|b_3$ \\ $b_2|b_4$} (a)
    (a) edge [bend left=20] node [el,below] {$b_3|b_1$ \\ $b_4|b_2$} (c);
\end{tikzpicture}
\end{multicols}
\caption{Example of a pairing matrix $C$ and the associated automaton $\A_C$}
\label{fig_automaton_Ac}
\end{figure}
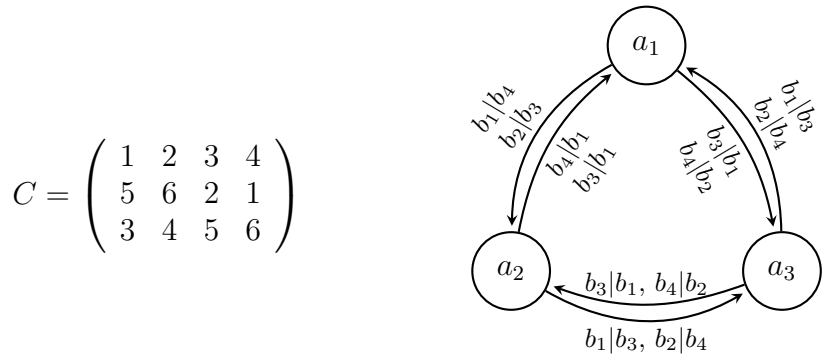

An automaton $\A=(S, X, \mu, \nu)$ is called invertible if, for all $s \in S$, the transformation $\nu(s,\cdot): X \rightarrow X$ is a permutation of $X$. For invertible automata, one can naturally extend the functions $\mu,\nu$ to $S^{-1}\times X$ as $\mu(s^{-1},y)=t^{-1}$ and $\nu(s^{-1},y)=x$, where $y=\nu(s,x)$ and $t=\mu(s,x)$. An invertible automaton $\A$ is called bireversible if the map $$(s,x)\mapsto (\mu(s,x),\nu(s,x))$$ is a bijection on $S\times X$ and $S^{-1}\times X$. For more information about Mealy automata and related groups, see \cites{BK,GNS}.

\begin{proposition}\label{prop_AC_invertible}
Let $C$ be a pairing matrix and $\A_C$ the associated automaton. The following statements are equivalent:
\begin{enumerate}
  \item every row and every column of $C$ have exactly one equal coefficient besides the common coefficient in their intersection;
  \item $\A_C$ is invertible;
  \item $\A_C$ is bireversible.
\end{enumerate}
\end{proposition}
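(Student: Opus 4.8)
The plan is to encode the pairing underlying $C$ as a fixed-point-free involution $\pi$ on $\{1,\dots,n\}\times\{1,\dots,m\}$, writing $\pi(i,j)=(i',j')$, so that the arrows of $\A_C$ are precisely $a_i\xrightarrow{b_j\mid b_{j'}}a_{i'}$, i.e.\ $\mu(a_i,b_j)=a_{i'}$ and $\nu(a_i,b_j)=b_{j'}$. The starting observation is that, since no row and no column of $C$ repeats a value, one always has $i'\ne i$ and $j'\ne j$. This has two immediate consequences. First, the map $(a_i,b_j)\mapsto(\mu(a_i,b_j),\nu(a_i,b_j))$ on $S\times X$ is nothing but $\pi$ read off on indices, an involution and hence a bijection; so the ``$S\times X$'' clause in the definition of bireversible holds for \emph{every} automaton of the form $\A_C$. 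Second, for each $i$ the output map $\nu(a_i,\cdot)$ is $b_j\mapsto b_{\gamma_i(j)}$, where $\gamma_i\colon j\mapsto j'$ is a fixed-point-free self-map of $\{1,\dots,m\}$, and invertibility of $\A_C$ means exactly that every $\gamma_i$ is a permutation. Thus it suffices to link condition (1), the statement ``every $\gamma_i$ is a permutation'' ($=$ condition (2)), and the remaining ``$S^{-1}\times X$'' clause of bireversibility; and since bireversibility is by definition a property of invertible automata, $(3)\Rightarrow(2)$ is automatic.

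For $(1)\Leftrightarrow(2)$, fix $i$ and, for each column $k$, let $t_{ik}$ be the number of values occurring both in row $i$ and in column $k$ besides the common value $c_{ik}$; condition (1) says exactly that $t_{ik}=1$ for all $i,k$. First I would check, straight from the no-repeats hypothesis, that $\gamma_i$ fails to be injective iff $t_{ik}\ge 2$ for some $k$: an equality $\gamma_i(j_1)=\gamma_i(j_2)=k$ with $j_1\ne j_2$ means $(i,j_1)$ and $(i,j_2)$ have partners in column $k$, yielding two entries of column $k$, distinct from $c_{ik}$ (because $j_1,j_2\ne k$, as $\gamma_i$ is fixed-point-free) and from each other (no repeats in row $i$), that also occur in row $i$; conversely two such ``extra'' shared values force two distinct partner positions in column $k$, i.e.\ a coincidence of $\gamma_i$. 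Then comes the one real computation: each of the $m$ values of row $i$ has its $\pi$-partner in a column $\ne$ its own and in a row $\ne i$, and assigning to a value its partner's column sets up a bijection between the $m$ values of row $i$ and the extra shared values counted by $\sum_{k=1}^m t_{ik}$; hence $\sum_{k=1}^m t_{ik}=m$. Since this is a sum of $m$ nonnegative integers, ``all $t_{ik}\le 1$'' is equivalent to ``all $t_{ik}=1$'', which with the previous step says $\A_C$ is invertible at $a_i$ iff $t_{ik}=1$ for every $k$. Letting $i$ vary gives $(2)\Leftrightarrow(1)$. Finally, condition (1) is invariant under transposing $C$, so by the same reasoning applied to $C^{\mathsf T}$ it is also equivalent to: for every column $j$ the ``partner-row'' map $\rho_j\colon i\mapsto i'$ is a permutation of $\{1,\dots,n\}$.

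For $(2)\Rightarrow(3)$, only the ``$S^{-1}\times X$'' clause is left, and we now have invertibility, hence (by the last remark) condition (1) and all $\rho_j$ permutations. Using the extension $\mu(a_i^{-1},y)=t^{-1}$, $\nu(a_i^{-1},y)=x$ for $y=\nu(a_i,x)$, $t=\mu(a_i,x)$: given $a_i^{-1}$ and a letter $b_l$, invertibility of $\gamma_i$ furnishes the unique $j$ with $\gamma_i(j)=l$, and then $\mu(a_i^{-1},b_l)=a_{i'}^{-1}$, $\nu(a_i^{-1},b_l)=b_j$ with $(i',l)=\pi(i,j)$, i.e.\ $i'=\rho_j(i)$. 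Thus on indices the $S^{-1}\times X$--map equals the composition of the bijection $(i,l)\mapsto(i,\gamma_i^{-1}(l))$ with the map $(i,j)\mapsto(\rho_j(i),j)$; the latter permutes $\{1,\dots,n\}$ inside each column $j$ and is a bijection because every $\rho_j$ is one, so the composite is a bijection of $\{1,\dots,n\}\times\{1,\dots,m\}$. Hence $\A_C$ is bireversible, which completes the equivalences.

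The main obstacle --- really the only point demanding care rather than bookkeeping --- is the counting identity $\sum_k t_{ik}=m$: one must faithfully track the correspondence between coincidences of $\gamma_i$, $\pi$-partner positions, and values shared by a row and a column, invoking $i'\ne i$ and $j'\ne j$ at each step to be certain the compared coefficients are genuinely distinct. The rest is unwinding definitions; the structural point worth isolating is that the $S\times X$ half of bireversibility is free for automata of the form $\A_C$, which is exactly what reduces the whole proposition to invertibility together with the inverse side.
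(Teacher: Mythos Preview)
Your argument is correct and follows the same route as the paper: both reduce to the observation that invertibility means each row map $j\mapsto j'$ is a permutation, and that the remaining bireversibility clause then follows from the symmetric column statement (the paper phrases this as passing between the various ``for every pair $(\cdot,\cdot)$ there is a unique pair $(\cdot,\cdot)$'' formulations, while you name the column maps $\rho_j$ explicitly and decompose the $S^{-1}\times X$ map as a composition of bijections). Your counting identity $\sum_k t_{ik}=m$ is a slight detour---once you note that $t_{ik}=|\gamma_i^{-1}(k)|$, the equivalence (1)$\Leftrightarrow$(2) is immediate without summing, which is essentially how the paper treats it.
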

\begin{proof}
The statement in the first item is equivalent to saying that for every row $i$ the map $j\mapsto j'$ defined on $\{1,2,\ldots, m\}$ by the relation $c_{ij}=c_{i'j'}$ is a permutation. This exactly means that the automaton $\A_C$ is invertible. Another reformulation of invertability: for every pair $(i,j')$ there exists a unique pair $(i',j)$ such that $c_{ij}=c_{i'j'}$. It follows that for every $(i',j')$ there exists a unique $(i,j)$ such that $c_{i'j'}=c_{ij}$, and for every $(i',j)$ there exists a unique $(i,j')$ with $c_{i'j'}=c_{ij}$. This means that $\A_C$ is bireversible.
\end{proof}

\begin{definition}
We say that a matrix $C$ defines an invertible pairing if it satisfies the conditions of Proposition~\ref{prop_AC_invertible}.
\end{definition}

We can characterize invertible automata corresponding to pairings in term of their Helix graph. The Helix graph of an automaton $\A$ is the directed graph $\Helix_{\A}$ on the vertex set $S\times X$ with arrows
\[
(s,x)\rightarrow (t,y) \mbox{ in $\Helix_{\A}$ \quad whenever \quad } s\xrightarrow{x|y}t \mbox{ in $\A$}.
\]

\vspace{0.0cm}
\begin{proposition}
A bireversible automaton $\A$ corresponds to a certain pairing matrix if and only if $\A$ does not contain loops and arrows of the form $s\xrightarrow{x|x} t$, and the Helix graph of $\A$ consists of cycles of length two.
\end{proposition}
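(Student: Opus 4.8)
The plan is to prove the two implications separately: for the forward direction, to read the defining conditions of a pairing matrix directly off the arrows and the Helix graph of $\A_C$; for the converse, to recover a pairing from the cycle decomposition of $\Helix_\A$. Throughout I would keep the states named $a_1,\dots,a_n$ and the alphabet $b_1,\dots,b_m$.

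For the forward direction I would start from $\A=\A_C$ and use that, by construction, an arrow $a_i\xrightarrow{b_j|b_{j'}}a_{i'}$ encodes an equality $c_{ij}=c_{i'j'}$ between two distinct positions of $C$. A loop would force $i=i'$, i.e.\ a repeated value in row $i$; an arrow of the form $a_i\xrightarrow{b_j|b_j}a_{i'}$ would force $j=j'$, a repeated value in column $j$; both are impossible for a pairing matrix, so $\A$ contains neither. For the Helix graph I would invoke that each value of a pairing matrix occurs at exactly two positions: the entry $c_{ij}$ pairs the vertex $(a_i,b_j)$ with a unique $(a_{i'},b_{j'})$, and the two arrows $a_i\xrightarrow{b_j|b_{j'}}a_{i'}$ and $a_{i'}\xrightarrow{b_{j'}|b_j}a_i$ make $\{(a_i,b_j),(a_{i'},b_{j'})\}$ a cycle of length two; since each vertex lies in exactly one such pair, $\Helix_\A$ is a disjoint union of $2$-cycles.

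For the converse I would assume $\A$ bireversible with the three stated properties. Bireversibility makes $(s,x)\mapsto(\mu(s,x),\nu(s,x))$ a bijection of $S\times X$, so every vertex of $\Helix_\A$ has in-degree and out-degree one and $\Helix_\A$ decomposes into directed cycles, all of length two by hypothesis. I would label these $nm/2$ cycles by $1,\dots,nm/2$ (so in particular $nm$ is even) and set $c_{ij}$ equal to the label of the $2$-cycle through $(a_i,b_j)$; then each label occurs exactly twice in $C$. If a row of $C$ had a repeated value, the two vertices $(a_i,b_j),(a_i,b_{j'})$ holding it would span a $2$-cycle, forcing a loop $a_i\xrightarrow{b_j|b_{j'}}a_i$ in $\A$; a repeated value in a column would force an arrow $a_i\xrightarrow{b_j|b_j}a_{i'}$. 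Both are excluded, so $C$ is a pairing matrix. It remains to check $\A_C=\A$, which I would do by tracing the definitions: for $(i,j)\neq(i',j')$, one has $c_{ij}=c_{i'j'}$ exactly when $(a_i,b_j)$ and $(a_{i'},b_{j'})$ are the two vertices of a single $2$-cycle, exactly when $a_i\xrightarrow{b_j|b_{j'}}a_{i'}$ is an arrow of $\A$.

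I do not expect a genuine obstacle: the proposition is essentially an unpacking of definitions. The one point that deserves care is the final matching $\A_C=\A$, where the $2$-cycle decomposition of $\Helix_\A$ must be reconciled with the occurrence-pairing of $C$ in \emph{both} directions (no arrow of $\A$ lost, no spurious arrow of $\A_C$ created). This is also where the hypotheses ``no loops'' and ``no arrows $s\xrightarrow{x|x}t$'' are used and cannot be replaced by the weaker condition ``$\Helix_\A$ has no loops'': an arrow $s\xrightarrow{x|y}s$ with $x\ne y$, or $s\xrightarrow{x|x}t$ with $s\ne t$, is compatible with $\Helix_\A$ being a union of $2$-cycles, yet it would produce a repeated value in a row, respectively a column, of $C$.
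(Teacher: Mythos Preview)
Your proposal is correct and follows essentially the same route as the paper: both directions hinge on the correspondence between the $2$-cycles of $\Helix_\A$ and the pairs $c_{ij}=c_{i'j'}$, together with the dictionary ``no loops $\leftrightarrow$ no repeated row values'' and ``no arrows $s\xrightarrow{x|x}t$ $\leftrightarrow$ no repeated column values''. Your write-up is in fact more careful than the paper's, which leaves the verification $\A_C=\A$ and the explicit use of the two forbidden-arrow conditions in the converse direction somewhat implicit; your closing remark on why these two hypotheses cannot be replaced by the single condition ``$\Helix_\A$ has no fixed vertices'' is a useful addition not present in the paper.
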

\begin{proof}
The Helix graph of a bireversible automaton consists of cycles. If $\A$ corresponds to a paring matrix $C$, then the pairing $c_{ij}=c_{i'j'}$ implies a cycle in the Helix graph $\Helix_\A$:
\[
(a_i,b_j)\rightarrow (a_{i'},b_{j'})\rightarrow (a_i,b_j).
\]
The absence of loops and arrows $s\xrightarrow{x|x} t$ is equivalent to absence of repeated values in rows and columns of $C$. Conversely, if all cycles have length two, then we can construct a matrix $C=(c_{sx})$ with rows labeled by the states and columns labeled by the letters such that
\[
c_{sx}=c_{ty} \ \mbox{ if and only if \  $(s,x)\rightarrow (t,y)\rightarrow (s,x)$ in $\Helix_\A$}.
\]
The matrix $C$ is a pairing matrix, because $(t,y)\neq (s,x)$ and $(t,y)$ is uniquely determined by $(s,x)$. 
\end{proof}

An automaton $\A=(S,X,\mu,\nu)$ can be uniquely extended to an automaton $\A^{*}$, where the states and letters are the words over $S$ and $X$ respectively:
\begin{align*}
s\xrightarrow{x_1|y_1}q \mbox{ \ and \ } q\xrightarrow{x_2|y_2}t \quad &\Rightarrow \quad s\xrightarrow{x_1x_2|y_1y_2}t,\\
s_1\xrightarrow{x|z}t_1 \mbox{ \ and \ } s_2\xrightarrow{z|y}t_2 \quad &\Rightarrow \quad s_1s_2\xrightarrow{x|y}t_1t_2.
\end{align*}
Certain cycles in the automaton $\A_C^{*}$ produce elements of finite order in the group $\Gamma_C$.

\begin{proposition}
Let $C$ be a pairing matrix. Then
\begin{eqnarray*}
s\xrightarrow{xx\ldots x|yy\ldots y}s \mbox{\quad in \ $\A_C^{*}$} \ &\Rightarrow& \ (yx^{-1})^k=e \mbox{\quad in \ $\Gamma_C$},\\
ss\ldots s\xrightarrow{x|x}tt\ldots t \mbox{\quad in \ $\A_C^{*}$} \ &\Rightarrow& \ (t^{-1}s)^k=e \mbox{\quad in \ $\Gamma_C$}.
\end{eqnarray*}
\end{proposition}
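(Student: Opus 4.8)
The plan is, in both cases, to unfold the given cycle in $\A_C^{*}$ into a finite path in $\A_C$, rewrite each individual arrow of that path as one of the defining relations of $\Gamma_C$, and then telescope the resulting chain of equalities.

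For the first implication, write $x=b_j$ and $y=b_{j'}$ and let $k$ be the common length of the input and output words. By the first (series) extension rule for $\A_C^{*}$, the loop $s\xrightarrow{xx\ldots x\mid yy\ldots y}s$ unfolds as a cycle
\[
s=a_{i_0}\xrightarrow{\,b_j\mid b_{j'}}a_{i_1}\xrightarrow{\,b_j\mid b_{j'}}\cdots\xrightarrow{\,b_j\mid b_{j'}}a_{i_k}=s
\]
in $\A_C$; here the intermediate output letters are all forced to be $b_{j'}$ because the output word is the constant word $yy\ldots y$. Each arrow $a_{i_{\ell-1}}\xrightarrow{b_j\mid b_{j'}}a_{i_\ell}$ encodes $c_{i_{\ell-1},j}=c_{i_\ell,j'}$, hence the relation $a_{i_{\ell-1}}x=a_{i_\ell}y$ holds in $\Gamma_C$, i.e.\ $a_{i_{\ell-1}}=a_{i_\ell}\,(yx^{-1})$. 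Iterating from $\ell=k$ down to $\ell=1$ gives $a_{i_0}=a_{i_k}(yx^{-1})^k=a_{i_0}(yx^{-1})^k$, so $(yx^{-1})^k=e$.

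The second implication is the parallel analogue, using the second extension rule. Put $s=a_i$, $t=a_{i'}$, $x=b_{j_0}$, and let $k$ be the length of the state words. The path $ss\ldots s\xrightarrow{x\mid x}tt\ldots t$ unfolds as arrows
\[
a_i\xrightarrow{\,b_{j_{\ell-1}}\mid b_{j_\ell}}a_{i'}\qquad(\ell=1,\ldots,k)
\]
in $\A_C$, for suitable letters $b_{j_1},\ldots,b_{j_{k-1}}$, where $b_{j_k}=b_{j_0}=x$ because the input and output words are both $xx\ldots x$. Each such arrow encodes $c_{i,j_{\ell-1}}=c_{i',j_\ell}$, i.e.\ the relation $s\,b_{j_{\ell-1}}=t\,b_{j_\ell}$, equivalently $b_{j_\ell}=(t^{-1}s)\,b_{j_{\ell-1}}$. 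Telescoping, $b_{j_0}=b_{j_k}=(t^{-1}s)^k b_{j_0}$, hence $(t^{-1}s)^k=e$.

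I do not expect a genuine obstacle here; the only care required is in handling the two extension rules of $\A_C^{*}$ correctly --- the first composes arrows in series and forces the intermediate outputs to be constant when the output word is constant, while the second composes them in parallel and glues the output letter at one level to the input letter at the next --- and in keeping $x,y$ (respectively $s,t$) on the correct sides of each relation so that the telescoped product comes out as $(yx^{-1})^k$ (respectively $(t^{-1}s)^k$) rather than its inverse.
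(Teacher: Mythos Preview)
Your argument is correct and follows the same approach as the paper: unfold the transition in $\A_C^{*}$ into a chain of single-step arrows in $\A_C$, translate each arrow into a defining relation of $\Gamma_C$, and telescope. The paper only writes out the first implication explicitly and leaves the second implicit; you carry out both, and your version of the telescoping for the first case (writing $a_{i_{\ell-1}}=a_{i_\ell}(yx^{-1})$ and iterating) is slightly cleaner than the paper's chain $sx=s_2y=s_3yx^{-1}y=\cdots$. One small wording slip: in the second implication the input and output are single \emph{letters} $x$, not words $xx\ldots x$; the correct justification for $b_{j_0}=b_{j_k}=x$ is simply that the overall input and output letters of the composite state $s^k$ are both $x$.
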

\begin{proof}
The arrow $s\xrightarrow{xx\ldots x|yy\ldots y}s$ in $\A_C^{*}$ means that we have the following sequence of arrows in $\A_C$:
\[
s\xrightarrow{x|y}s_2, \ s_2\xrightarrow{x|y}s_3, \ \ldots, \ s_k\xrightarrow{x|y}s.
\]
These arrows correspond to the following relations in $\Gamma_C$:
\[
sx=s_2y, \ s_2x=s_3y, \ \ldots \ , s_kx=sy.
\]
Then
\begin{gather*}
sx=s_2y=s_3yx^{-1}y=s_4yx^{-1}yx^{-1}y=\ldots=s(yx^{-1})^{k-1}y\\  \Rightarrow \quad (yx^{-1})^k=e.
\end{gather*}
\end{proof}


The Kaplansky's zero divisor conjecture can be equivalently formulated as $a^2=0$ implies $a=0$ instead of $ab=0$ implies $a=0$ or $b=0$. This allows to reduce the generators of the associated finitely presented group to $a_1,\ldots,a_n$. Let $C$ be an $n\times n$ pairing matrix (here $n$ is even), and define a finitely presented group
\[
G_C=\langle a_1,\ldots,a_n\ |\ a_ia_j=a_{i'}a_{j'} \mbox{ whenever } c_{ij}=c_{i'j'}\rangle.
\]
Then the Kaplansky's conjecture is equivalent to the existence of non-trivial elements of finite order in the group $G_C$.  
We get a different way to associate an automaton to a pairing matrix: define an automaton $\B_C$ with the set of states $S=\{a_1,\ldots,a_n\}$, alphabet $X=\{a_1,\ldots,a_n\}$, and arrows
\[
a_i\xrightarrow{a_j|a_{i'}}a_{j'} \ \mbox{ whenever $c_{ij}=c_{i'j'}$}.
\]
Then any transition $s\xrightarrow{v | u} t$ in $\B_C^{*}$ implies the relation $sv=ut$ in $G_C$.
Unfortunately, the automaton $\B_C$ is never invertible. Indeed, if the first row of $C$ is $[1,2,\ldots,n]$, then some other row $k$ contains at least two numbers $1\leq i<j\leq n$. This means that there are two arrows in $\B_C$:
\[
a_1\xrightarrow{a_i|a_k} b\qquad \mbox{ and } \qquad a_1\xrightarrow{a_j|a_k} b',
\]
what contradicts invertibility.

\section{The zero divisor conjecture for invertible pairings}

We show that the zero divisor conjecture holds for $3\times 2n$ invertible pairings.

\begin{theorem}
Let $C$ be a $3\times 2n$ invertible pairing matrix. Let us assume that the generators $a_1$, $a_2$, $a_3$ and $b_1$, \ldots, $b_{2n}$ of the group $\Gamma_{C}$ are distinct. Then $\Gamma_C$ contains a nontrivial element of finite order.
\end{theorem}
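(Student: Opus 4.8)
The plan is to use the rigidity of bireversible automata on three states to rewrite $\Gamma_C$ as a free product $\langle a_1\rangle * \Lambda$ in which $\Lambda$ has a presentation transparent enough that a finite-order element can be read off.

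First I would pin down the shape of $\A_C$. By Proposition~\ref{prop_AC_invertible} it is bireversible; in particular $\mu(\cdot,b_j)\colon S\to S$ is a permutation for every letter $b_j$, and since $\A_C$ comes from a pairing matrix it has no loops, so $\mu(\cdot,b_j)$ is a fixed-point-free permutation of a three-element set, i.e.\ one of the two $3$-cycles $(1\,2\,3)$, $(1\,3\,2)$. Let $B_+$ and $B_-$ be the sets of letters realising the two cycles. Each relation $c_{ij}=c_{i'j'}$ is symmetric, so every arrow $a_i\xrightarrow{b_j|b_{j'}}a_{i'}$ is accompanied by its reverse $a_{i'}\xrightarrow{b_{j'}|b_j}a_i$; combining this with the $3$-cycle structure shows that $b_j$ and $\nu(a_i,b_j)$ always lie in opposite classes (if both were in $B_+$, the common $3$-cycle $\zeta=\mu(\cdot,b_j)=\mu(\cdot,b_{j'})$ would satisfy $\zeta^{2}(a_i)=a_i$, impossible). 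Hence each $\nu(a_i,\cdot)$ interchanges $B_+$ with $B_-$, and in particular $|B_+|=|B_-|=n$.

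Next I would put $c=a_1^{-1}a_2$, $d=a_2^{-1}a_3$, so that $a_2=a_1c$ and $a_3=a_1cd$; every defining relation $a_ib_j=a_{i'}b_{j'}$ then loses all occurrences of $a_1$, whence $\Gamma_C\cong\langle a_1\rangle * \Lambda$, and it suffices to produce torsion in $\Lambda$. For $b_j\in B_+$ with $\mu(\cdot,b_j)=(1\,2\,3)$, the three relations coming from the arrows with input $b_j$ are
\[
b_j=c\,\nu(a_1,b_j),\qquad b_j=d\,\nu(a_2,b_j),\qquad b_j=d^{-1}c^{-1}\,\nu(a_3,b_j),
\]
with all outputs in $B_-$; because (by the previous paragraph) each paired value has exactly one of its two arrows with input in $B_+$, these exhaust the defining relations. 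Since $\nu(a_1,\cdot)$ restricts to a bijection $B_+\to B_-$, the first family lets me eliminate all generators $b_k$ with $k\in B_-$, and the remaining two families turn into $b_j=dc^{-1}b_{\sigma(j)}$ and $b_j=d^{-1}c^{-2}b_{\tau(j)}$ for all $j\in\{1,\dots,n\}$, where $\sigma,\tau$ are permutations of $\{1,\dots,n\}$.

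Finally I would extract a torsion element. Iterating $b_j=dc^{-1}b_{\sigma(j)}$ around a cycle of $\sigma$ of length $\ell$ gives $(dc^{-1})^{\ell}=e$, and likewise $(d^{-1}c^{-2})^{m}=e$ whenever $m$ is a cycle length of $\tau$. If both $dc^{-1}=e$ and $d^{-1}c^{-2}=e$, then $d=c$ and $c^{2}=c^{-1}$, so $c^{3}=e$; since the generators are assumed distinct, $c=a_1^{-1}a_2\neq e$, so $c$ has order $3$. Otherwise, say $dc^{-1}\neq e$ (the case $d^{-1}c^{-2}\neq e$ being symmetric): a fixed point $j$ of $\sigma$ would force $dc^{-1}=b_jb_j^{-1}=e$, so $\sigma$ has no fixed point, hence a cycle of length $\ell\ge 2$, and then $(dc^{-1})^{\ell}=e$ makes $dc^{-1}$ a nontrivial element of finite order. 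In all cases $\Gamma_C$ has a nontrivial element of finite order. I expect the main obstacle to be the first step — extracting the full combinatorial content of invertibility for three states, namely that every letter acts as a $3$-cycle on the states and that the state permutations swap $B_+$ and $B_-$. Once that rigidity is available, Step~2 is bookkeeping and Step~3 is essentially immediate; the one point requiring care is the degenerate case, where the distinctness hypothesis is exactly what guarantees that the element $c$ with $c^{3}=e$ is not the identity.
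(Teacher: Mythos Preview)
Your proof is correct and takes essentially the same approach as the paper. Your $B_+/B_-$ split is exactly the paper's bipartite partition of the columns, your iteration of $b_j=dc^{-1}b_{\sigma(j)}$ along a cycle of $\sigma$ to obtain $(dc^{-1})^\ell=e$ is the paper's bipartite-graph cycle argument yielding $(t_1t_2^{-1})^{m_{12}}=e$ (with $c=t_1$, $d=t_2$, so that $dc^{-1}=(t_1t_2^{-1})^{-1}$ and $d^{-1}c^{-2}=t_3t_1^{-1}$), and the degenerate case $c^3=e$ is identical to the paper's $t_1^3=e$; the free-product decomposition $\langle a_1\rangle * \Lambda$ is a pleasant extra but not needed for the conclusion.
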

\begin{proof}
Each row and column of $C$ do not contain two equal coefficients, since otherwise $a_i=a_j$ or $b_i=b_j$ for $i\neq j$.
We want to show that by permuting the columns/rows and relabeling equal coefficients, the matrix $C$ can be brought to the following form:
\[
C=\left(
    \begin{array}{cccccccc}
      1 & 2 & \ldots & n & n+1 & n+2 & \ldots & 2n \\
2n+1 & 2n+2 & \ldots & 3n & c_{2,n+1} & c_{2,n+2} & \ldots & c_{2,2n} \\
      c_{3,1} & c_{3,2} & \ldots & c_{3,n} & c_{3,n+1} & c_{3,n+2} & \ldots & c_{3,2n} \\
    \end{array}
  \right),
\]
where $(c_{2,n+1}, c_{2,n+2}, \ldots, c_{2,2n})$ is a permutation of $1,2,\ldots,n$, $(c_{3,1}, c_{3,2}, \ldots, c_{3,n})$ is a permutation of $n+1, n+2, \ldots, 2n$, and $(c_{3,n+1}, c_{3,n+2}, \ldots, c_{3,2n})$ is a permutation of $2n+1, 2n+2, \ldots, 3n$. This gives certain ``bipartite'' structure on $C$.

First, half of the numbers $1,2,\ldots,2n$ should be in the second row and the other half in the third one, since otherwise there would be two equal elements in one row. By permuting the rows if necessary, we can assume that $1,2,\ldots,n$ are in the second row.

Second, the numbers $1,2,\ldots,n$ in the second row cannot belong to columns $1$, $2$,\ldots, $n$. Indeed, if $c_{2,i}\in\{1,2,\ldots,n\}$ for $1\leq i\leq n$, then the $i$th column and $3$rd row do not contain common coefficient except for their intersection; this contradicts invertibility. Therefore, $(c_{2,n+1}, c_{2,n+2}, \ldots, c_{2,2n})$ is a permutation of $1,2,\ldots,n$. By the same reason, $(c_{3,1}, c_{3,2}, \ldots, c_{3,n})$ is a permutation of $n+1, n+2, \ldots, 2n$ and $(c_{3,n+1}, c_{3,n+2}, \ldots, c_{3,2n})$ is a permutation of $2n+1, 2n+2, \ldots, 3n$.

Let us denote $x_i=b_i$ and $y_i=b_{n+i}$ for $i=1,\ldots,n$ in order to reflect the bipartite structure of $C$. Then we get the following relations in $\Gamma_C$:
\begin{align*}
t_1=a_1^{-1}a_2&=x_1y_{i_1}^{-1}=x_2y_{i_2}^{-1}=\ldots=x_ny_{i_n}^{-1},\\
t_2=a_2^{-1}a_3&=x_1y_{j_1}^{-1}=x_2y_{j_2}^{-1}=\ldots=x_ny_{j_n}^{-1},\\
t_3=a_3^{-1}a_1&=x_1y_{k_1}^{-1}=x_2y_{k_2}^{-1}=\ldots=x_ny_{k_n}^{-1},
\end{align*}
where $(i_1,i_2,\ldots,i_n)$, $(j_1,j_2,\ldots,j_n)$, and $(k_1,k_2,\ldots,k_n)$ are permutations of $1,2,\ldots,n$.
Notice that if we write the corresponding relations for each $t_1^{-1}, t_2^{-1}, t_3^{-1}$, we will see $y_1x_{l_1}^{-1}, y_2x_{l_2}^{-1},\ldots, y_nx_{l_n}^{-1}$, where $(l_1,l_2,\ldots,l_n)$ is a permutation of $1,2,\ldots,n$.

We want to show that $t_1t_2^{-1}$ has finite order. Let us construct a directed $(n,n)$ bipartite graph: the left part consists of vertices $x_1y_{i_1}^{-1}$, \ldots, $x_ny_{i_n}^{-1}$, which represent $t_1$, the right part consists of vertices $y_1x_{l_1}^{-1}, y_2x_{l_2}^{-1},\ldots, y_nx_{l_n}^{-1}$, which represent $t_2^{-1}$. We put a directed edge from $x_sy_{i_s}^{-1}$ to $y_{i_s}x_{l_{is}}^{-1}$ for $s=1,\ldots,n$, and a directed edge from $y_{s}x_{l_s}^{-1}$ to $x_{l_s}y^{-1}_{i_{l_s}}$ for $s=1,\ldots,n$. Now there is a directed edge passing from any vertex. Hence, there is a directed cycle of some length $1\leq 2m_{12}\leq 2n$. By multiplying consequently the elements along this cycle, we get  $(t_1t_2^{-1})^{m_{12}}=e$.

The same observation holds for $t_2t_3^{-1}$ and $t_3t_1^{-1}$. Hence
\[
(t_1t_2^{-1})^{m_{12}}=(t_2t_3^{-1})^{m_{23}}=(t_3t_1^{-1})^{m_{31}}=e
\]
for some $1\leq m_{ij}\leq n$. If $t_i\neq t_j$ for some $i\neq j$, we get a nontrivial element $t_it_j^{-1}$ of finite order. If  $t_1=t_2=t_3$, then $$t_1^3=t_1t_2t_3=a_1^{-1}a_2\cdot a_2^{-1}a_3\cdot a_3^{-1}a_1=e$$ and $t_1=a_1^{-1}a_2\neq e$ by assumption. The theorem is proved.
\end{proof}

\begin{remark}
The previous proof also works for larger pairing matrices $C$/automata $\A_C$ as soon as they admit the following cyclic-bipartite structure: there is an order on the states $s_1,s_2,\ldots,s_n$ and a partition of the alphabet $X=A\sqcup B$, $|A|=|B|$ so that the arrows in $\A_C$ are of the form
\[
s_i\xrightarrow{A|B} s_{i+1} \ \mbox{ and } \ s_i\xrightarrow{B|A} s_{i-1}.
\]
Invertible pairing $3\times 2n$ matrices always admit such a structure.
\end{remark}

\begin{remark}
The automata $\A_C$ associated to invertible pairings are bireversible. There is a strong connection between bireversible automata and lattices in the product of the automorphism groups of two regular trees that are given by presentation
\[
G_{\A}=\{S,X\, |\, sx=yt \mbox{ whenever $s\xrightarrow{x|y}t$ in $\A$ } \}.
\]
The groups $G_{\A}$ are always torsion-free; however, some of them are non-residually finite, even virtually simple, and not elementary sofic (see \cite{BK} are the references therein). It is unknown whether the groups $G_{\A}$ are sofic or satisfy the Kaplansky's conjectures.
\end{remark}

\begin{bibdiv}
\begin{biblist}

\bib{Ali}{article}{
author = {Alireza Abdollahi and Zahra Taheri},
title = {Zero divisors and units with small supports in group algebras of torsion-free groups},
journal = {Communications in Algebra},
volume = {46},
number = {2},
pages = {887--925},
year = {2018},
publisher = {Taylor & Francis},
doi = {10.1080/00927872.2017.1344688}
}

\bib{Ali2}{article}{
author = {Alireza Abdollahi and Fatemeh Jafari},
title = {Zero divisor and unit elements with supports of size $4$ in group algebras of torsion-free groups},
journal = {Communications in Algebra},
volume = {47},
number = {1},
pages = {424-449},
year = {2019},
publisher = {Taylor & Francis},
doi = {10.1080/00927872.2018.1477949}
}

\bib{BK}{article}{
author = {Bondarenko, Ievgen},
author = {Kivva, Bohdan},
title = {Automaton groups and complete square complexes},
journal = {Groups, Geometry, and Dynamics},
volume = {16},
number = {1},
pages = {305--332},
year = {2022}
}

\bib{DHJ}{article}{
  author={Dykema, Kenneth},
  author={Heister, Timothy},
  author={Juschenko, Kate}
  title={Finitely presented groups related to Kaplansky's direct finiteness conjecture},
  journal={Experimental Mathematics},
  year={2015},
  volume={24(3)},
  pages={326-338}
}

\bib{DJ}{article}{
  author={Dykema, Kenneth},
  author={Juschenko, Kate}
  title={On stable finiteness of group rings},
  journal={Algebra and Discrete Mathematics},
  year={2018},
  volume={19(1)},
}

\bib{ES04}{article}{
  author={Elek, G\'abor},
  author={Szab\'o, Endre},
  title={Sofic groups and direct finiteness},
  journal={J. Algebra},
  year={2004},
  volume={280},
  pages={426--434}
}

\bib{Gardam}{article}{
 Author = {Gardam, Giles},
 Title = {A counterexample to the unit conjecture for group rings},
 FJournal = {Annals of Mathematics. Second Series},
 Journal = {Ann. Math. (2)},
 ISSN = {0003-486X},
 Volume = {194},
 Number = {3},
 Pages = {967--979},
 Year = {2021},
 DOI = {10.4007/annals.2021.194.3.9}
}

\bib{GNS}{article}{
 author = {R.I. Grigorchuk, V.V. Nekrashevych, V.I.~Sushchansky},
 title =  {Automata, dynamical systems and groups},
 journal = {Proceedings of the Steklov Institute of Mathematics},
 volume = {231},
 pages = {128--203},
 year = {2000}
}

\bib{Pa77}{book}{
   author={Passman, Donald S.},
   title={The algebraic structure of group rings},
   series={Pure and Applied Mathematics},
   publisher={Wiley-Interscience [John Wiley \& Sons]},
   place={New York},
   date={1977}
}

\bib{Schw}{article}{
  author={Schweitzer, Pascal},
  title={On zero divisors with small support in group rings of torsion--free groups},
  journal={Journal of Group Theory},
  volume={16},
  number={5},
  year={2013},
  pages={667--693}  
}

\bib{Thom}{misc}{
    title={Zero divisor conjecture and idempotent conjecture},
    author={Andreas Thom\phantom{x}(mathoverflow.net/users/8176)},
    note={\url{http://mathoverflow.net/questions/34616} (version: 2010-08-05)},
    eprint={http://mathoverflow.net/questions/34616},
    organization={MathOverflow}, 
}

\end{biblist}
\end{bibdiv}

\end{document}